\def\RR{\mathbb{R}}
\def\uu{\text{\bf u}}
\def\ww{\text{\bf w}}
\def\ff{\text{\bf f}}
\def\AA{\text{\bf A}}
\theoremstyle{theorem}
\newtheorem{theorem}{Theorem}%[section]
\theoremstyle{corollary}
\theoremstyle{proposition}
\newtheorem{proposition}{Proposition}%[section]
\theoremstyle{lemma}
\theoremstyle{definition}
\theoremstyle{Assumption}
\theoremstyle{remark}
\newtheorem{remark}{Remark}
\theoremstyle{obs}
\begin{document}

\begin{center}
\begin{Large}{\bf Reconstruction for multi-wave imaging in attenuating media with large damping coefficient}
\end{Large}\\
\vspace{1em}
\begin{large}{\bf Benjam\'in Palacios}\\
{\small Department of Mathematics, University of Washington, Seattle, WA, USA}
\end{large}

\end{center}
\renewcommand{\abstractname}{} 
\begin{abstract} 
\noindent{\sc Abstract.} In this article we study the reconstruction problem in TAT/PAT on an attenuating media. Namely, we prove a reconstruction procedure of the initial condition for the damped wave equation via Neumann series that works for arbitrary large smooth attenuation coefficients extending the result of Homan in \cite{Homan}. We also illustrate the theoretical result by including some numerical experiments at the end of the paper.
\end{abstract}

\noindent{\it Keywords:} Multiwave imaging; Neumann series; damped wave equation; geometric optics\\

%\noindent Submitted to: {\it Inverse Problems}\\

%%%%%%%%%%%%%%%%%%%%%%%%%%%%%%%%%%%%%%%%%%%%%%%%%%%%%%%%%%%%
\centerline{\sc 1. Introduction}
{\it Thermoacoustic} and {\it Photoacoustic tomography} are coupled physics medical imaging techniques that consist in the application of a harmless electromagnetic pulse to some target tissue which causes a slightly increment in the local temperature, making the tissue to expand and produce pressure waves which are then recorded and used to reconstruct optical parameters of the region of interest. The goal of combining two different types of waves is to take advantage of the good contrast of the electromagnetic absorption parameters and the good resolution the ultrasound data provides, to finally obtain a tomogram endowed with both desired features. It's then natural to split the problem into two steps, the first one being the inversion of the ultrasound measurements to get some internal information related with the absorption of the electromagnetic radiation, and the second step, also call {\it quantitative TAT/PAT}, consisting in the inversion of such internal data to finally reconstruct the absorption coefficient which constitutes the image of the inside of the body. This paper concerns only in the first step.

The classical setting of this problem is to assume that waves propagate through the free space and there have been many results that follow this assumption \cite{2007Finch},\cite{2009Hristova},\cite{2009Kunyansky},\cite{TAT},\cite{TATbrain},\cite{Homan}. The validity of such hypothesis depends on the devices and setting used to acquire the ultrasound data, requiring then in this case to use detectors whose perturbation to the waves can be neglected. This is not always the case though \cite{2007BdryCondPAT}. Such fact has motivated the current study of the case where the acoustic waves propagate inside an enclosure, this is when one allows the waves to interact with some boundary or interface, as can be seen in \cite{2015StefanovYang}, \cite{2015AcostaMontalto}, \cite{Nguyen2015}, where the interactions are modeled by restricting the domain where waves propagate and by considering boundary conditions (e.g. Neumann, Robin). We believe that the analysis carried out in this article might be applied to the enclosure case to obtain reconstruction formulas when an interior damping coefficient is taken into account. %Such cases, namely partially reflecting boundary and perfectly reflecting boundary, will be the subject of a subsequent article.

It's a fact that the attenuation of the ultrasound waves affects the quality of the reconstruction, see for example \cite{2006Patch}, and
recently there has been a growing interest in trying to incorporate and compensate for the effect of the damping in TAT/PAT \cite{2016AcostaMontalto},\cite{Homan},\cite{2012PATatt},\cite{2011Roitner}. 
It's also known that the attenuation in biological tissues is strongly frequency-dependent, and modeling such relationship is not a trivial task. Many models have been proposed to represent such damping effect, most of them giving way to fractional derivatives and consequently to integro-differential operators (see \cite{2011fractionalwave} and \cite{2011Ammari} for a review of such models). We instead consider here a more simple non-frequency dependent model, namely the damped wave equation, motivated by the work done in \cite{Homan}.

We introduce a new Neumann series reconstruction formula that allows us to recover the source term in the attenuated TAT/PAT problem for the damped wave equation allowing more general attenuation coefficients. This problem was first addressed in \cite{Homan} where the author used a sharp time reversal introduced originally by Stefanov and Uhlmann in \cite{TAT}, and proved that under the assumption that the attenuation coefficient is smooth and sufficiently small, the error of the back projection
is a contraction.

Let $\Omega\subset\RR^n$ be a bounded domain with smooth boundary and let $\Lambda_a:f\mapsto u|_{[0,T]\times\partial\Omega}$ denote the measurement operator where $u$ satisfies
\begin{equation}\label{TAT_eq}
\left\{\begin{array}{rcl}
(\partial^2_t +a\partial_t- c^2\Delta)u &=& 0\; \text{ in }(0,T)\times\RR^n,\\
u|_{t=0}&=&f,\\
\partial_tu|_{t=0}&=&-af.
\end{array}\right.
\end{equation}
The particular form of the initial condition comes from the equivalence of system \eqref{TAT_eq} with the problem of finding $f$ in
$$(\partial^2_t +a\partial_t- c^2\Delta)u = f(x)\delta'(t),\; \text{ in }\RR\times\RR^n,\quad u = 0\;\text{ for }t<0,$$
as it's explained in \cite{Homan}. In such article, the time reversal operator associated to the damped wave equation is given by $\hat{A}_a:h\mapsto v(0,\cdot)$, where $v$ is the solution of
\begin{equation}\label{time_reversal}
\left\{\begin{array}{rcl}
(\partial^2_t +a\partial_t- c^2\Delta)v &=& 0\; \text{ in }(0,T)\times\Omega,\\
v|_{t=T}&=&\phi,\\
\partial_tv|_{t=T}&=&0,\\
v|_{(0,T)\times\partial\Omega}&=&h,\end{array}\right.
\end{equation}
with $\phi$ the harmonic extension of 
$h|_{t=T}$ to $\Omega$. 
Following this approach it is possible to get a reconstruction formula via Neumann series for small enough attenuations (see \cite[Theorem 2.3]{Homan}). The proof is based on the continuity of the error operator $\hat{K}_a = \text{Id}-\hat{A}_a\Lambda_a$, in terms of the attenuation coefficient, which provided that $(\Omega,c^{-2}dx^2)$ is non-trapping, $T$ larger than the supremum of the length of interior geodesics for the metric $c^{-2}dx^2$,
and $\|a\|_\infty$ is sufficiently small, then $\|\hat{K}_a\|<1$. 

The novelty of this article is that we get a Neumann series reconstruction that allows arbitrary large bounded smooth attenuations as long as they satisfy that their support is contained in $\overline{\Omega}$. We use a different time reversal technique than in \cite{Homan}, namely we backwardly solve the damped wave equation with a damping coefficient given by $-a(x)$ (see \eqref{damped_time_reversal}). When such IBVP is solved from $t=T$ to $t=0$, the sign of the damping term flips producing an attenuation of the reversed wave. Such fact allows us to better control the energy in the iterative process of the Neumann series. To prove the error operator is a contraction we employ the microlocal approach developed in \cite{TATbrain}, this is, we get microlocal energy estimates of the high frequency part for the transmitted waves that are used to show that after some time most of the initial energy lies outside the domain. The difficulty here is to treat the new term that the attenuation coefficient makes appear in the energy. Another consequence of the damping is that as in Homan's article, we also need the measurement time $T$ to be greater than the supremum of the length of the interior geodesics, which matches the time needed to get stability for the damped TAT/PAT problem, and which doubles the time needed in the undamped case. 
\bigskip

%%%%%%%%%%%%%%%%%%%%%%%%%%%%%%%%%%%%%%%%
\centerline{\sc 2. Preliminaries}

Given a domain $U$ and a scalar function $u(t,x)$, we define the local energy of $\uu = [u,u_t]$ at time $t$ as
$$E_U(\uu(t)) = \int_{U}(|\nabla_x u|^2 + c^{-2}|u_t|^2)dx.$$
We also define the extended energy functional to be
$$\mathcal{E}_U(\uu,\tau) =E_U(\uu(\tau)) + 2\int_{[0,\tau]\times U}ac^{-2}|u_t(t)|^2dtdx.$$
If $\uu$ is a solution of the damped wave equation in the whole space, it is well known that for any $U\subset \RR^n $ such that $u$ vanishes for all time on its boundary, the former energy functional is non-increasing due to the attenuation coefficient $a\geq0$, while the latter is conserved. Indeed, the first statement follows from the usual computation of the energy where we multiply equation \eqref{TAT_eq} by $2c^{-2}u_t$ and integrate over $U$, so then, integrating by parts we arrive to the energy inequality
$$\frac{d}{dt}E_U(\uu(t)) = -2\int_{U}ac^{-2}|u_t(t)|^2dx\leq 0.$$
If in addition we integrate in the interval $(0,\tau)$, for any $\tau>0$, we get the extended energy conservation. 
\\

The energy space $\mathcal{H}(U)$ of initial conditions is defined to be the completion of $C^\infty_0(U)\times C^\infty_0(U)$ under the energy norm
$$\|\ff\|^2_{\mathcal{H}(U)} = \int_{U}(|\nabla_x f_1|^2 + c^{-2}|f_2|^2)dx.$$
with $\ff = [f_1,f_2]$. 
%We also let $H_D(U)$ denote the completion of $C^\infty_0(U)$ under the norm
%$$\|f\|^2_{H_{D}(U)} = \int_{U}|\nabla_x f|^2dx,\quad\text{and}\quad \|f\|^2_{H_{D,a}(U)} = \|f\|^2_{H_D(U)}+\|af\|^2_{L^2(U;c^{-2}dx)},$$
%where the last term is the $L^2$-norm of $-af$ for the measure $c^{-2}dx$.
%From now on we will write $L^2(U)$ meaning the space $L^2(U;c^{-2}dx)$.
%Both spaces, $H_D$ and $H_{D,a}$, are topologically equivalent due to Poincare's inequality and identical when $a\equiv0$. Also, 
Notice that $\mathcal{H}(U) = H_D(U)\oplus L^2(U)$, where we write $L^2(U)=L^2(U;c^{-2}dx)$ the $L^2$ space for the measure $c^{-2}dx$.
% and in particular for initial conditions of the thermoacoustic problem, that is $\ff = [f,-af]$, the energy norm and the $H_{D,a}(U)$-norm satisfy the relation
%$$\|\ff\|_{\mathcal{H}(U)} = \|f\|_{H_{D,a}(U)}.$$

Recall that denoting $\uu = [u,u_t]$, it is possible to write \eqref{TAT_eq} as the following system
$$\uu_t = \text{\bf P}_a\uu,\quad \text{\bf P}_a = \left(\begin{matrix} 0&I\\c^2\Delta& -a\end{matrix}\right),$$
where $\text{\bf P}_a$ defines a strongly continuous semigroup, and given initial conditions $\ff = [f_1,f_2]\in \mathcal{H}(\Omega)$ the solution of the previous system takes the form $\uu = e^{t\text{\bf P}_a}\ff$.
\bigskip

%%%%%%%%%%%%%%%%%%%%%%%%%%%%%%%%%%%%%%%%
\centerline{\sc 3. Main result}

Let $\Omega\subset \RR^n$ be a strictly convex bounded domain with smooth boundary and let's consider a solution $u$ of the initial value problem
\begin{equation}\label{general IBVP}
\left\{\begin{array}{rcl}
(\partial^2_t +a\partial_t- c^2\Delta)u &=& 0\; \text{ in }(0,T)\times\RR^n,\\
u|_{t=0}&=&f_1,\\
u_t|_{t=0}&=&f_2,
\end{array}\right.
\end{equation}
where we assume that $c,a\in C^\infty(\RR^n)$ are such that $c>0, a\geq 0$ and
$c -1 = a = 0 \text{ in }\RR^n\setminus\Omega.$ In the case of TAT/PAT we take $(f_1,f_2) = (f,-af)$ for some $f\in H_{D}(\Omega)$.

We denote the measurement operator 
$$\Lambda_a:\mathcal{H}(\Omega) \to H^1((0,T)\times\partial\Omega),\quad \Lambda_a\ff := u|_{(0,T)\times\partial\Omega}.$$
%and we keep the same notation for the particular case of TAT/PAT, this is, when $\ff = [f,-af]$, $f\in H_{D,a}(\Omega)$, we write $\Lambda_a: H_{D,a}(\Omega) \to H^1((0,T)\times\partial\Omega)$.
By letting $h= \Lambda_a\ff$, we consider the time reversed wave $v(x,t)$ to be the solution of the backward problem
\begin{equation}\label{damped_time_reversal}
\left\{\begin{array}{rcl}
(\partial^2_t - a\partial_t- c^2\Delta)v &=& 0\; \text{ in }(0,T)\times\Omega,\\
v|_{t=T}&=&P_{\partial\Omega}h(T),\\
v_t|_{t=T}&=&0,\\
v|_{(0,T)\times\partial\Omega}&=&h,\end{array}\right.
\end{equation}
where $P_{\partial\Omega}$ is the harmonic extension operator in $\Omega$. Notice the sign in the attenuation term. We will see that by solving \eqref{damped_time_reversal} back in time the solution is also attenuated, and consequently we can control the energy during the time reversal and the subsequent iterations of the Neumann series. We set 
$$\AA_a:H^{1}_{(0)}([0,T]\times\partial\Omega)\to \mathcal{H}(\Omega) \cong H^1_0(\Omega)\oplus L^2(\Omega)$$
$$\AA_ah=[v(0,\cdot),v_t(0,\cdot)] =:[A_a^1h,A_a^2h]$$ the new time reversal operator which is a continuous map by \cite{Lasiecka} and finite speed of propagation (The subscript in $H^1_{(0)}$ stands for functions vanishing at $t=0$).

Notice that due to uniqueness of solutions, the error function $w$ satisfying
\begin{equation}\label{error_function}
\left\{\begin{array}{rcl}
(\partial^2_t - c^2\Delta)w &=& -a(u_t+v_t)\; \text{ in }(0,T)\times\Omega,\\
w|_{t=T}&=&u^T - \phi,\\
\partial_tw|_{t=T}&=&u_t^T,\\
w|_{(0,T)\times\partial\Omega}&=&0,\end{array}\right.
\end{equation}
with $(u^T,u_t^T) = (u(T,\cdot),u_t(T,\cdot))$, is such that $u = v+w$ in $(0,T)\times\Omega$. 
We then define the error operator for the TAT/PAT problem to be
$${\bf K}_a:\mathcal{H}(\Omega)\to \mathcal{H}(\Omega),\quad {\bf K}_af = [w(0,\cdot),w_t(0,\cdot)],$$ 
this is, ${\bf A}_a\Lambda_a = \text{Id} - {\bf K}_a$.

In contrast with the non-attenuated case we can no longer say the error operator is a composition of a compact and a unitary operator as in \cite{TAT}, since now $w$ satisfies a wave equation with nontrivial right hand side which depends on $u$. To overcome this fact we use the microlocal approach given in \cite{TATbrain} which is based on energy estimates of the high frequency part of the solution. The proof of the reconstruction formula is then reduced to show that at time $T$ a significant part of the initial energy has left the domain.

It's well known we can only detect singularities of the initial conditions that propagate through geodesics that hit the boundary at times less than the measurement time $T$. In our case though,
we consider a more restricted condition for the visibility of singularities due to the effect of the attenuation coefficient, which requires both ends of those geodesics to hit $\partial\Omega$ in a non-tangent way. When such coefficient is non-zero it is not possible to orthogonally project the initial data into the initial conditions that generate $\gamma_{(x,\xi)}$ and $\gamma_{(x,-\xi)}$ respectively (see \cite[section 4.2]{TATbrain}), therefore we cannot split the analysis and work with each of those branches independently. This condition on the visibility of singularities can be compared with the fact that the damped wave equation can not be extended to negative times, making necessary to double the measurement time needed in the undamped case to get uniqueness (see \cite[Theorem 3.1]{Homan}).

We set
$$T_0(\Omega) = \sup\{|\gamma|_{g}: \gamma \subset \bar{\Omega} \text{ geodesic for the metric } g = c^{-2}dx^2\}.$$
Requiring that $T_0(\Omega)$ is finite (this is ($\Omega,c^{-2}dx^2$) non-trapping),
%We will need to assume the initial condition is supported in a known compact set $\mathcal{K}\subset\Omega$ with smooth boundary, and such that 
%\begin{equation*}
%\begin{aligned}
%T_0(\Omega,\mathcal{K}) = \sup\{&|\gamma_{(x_0,\xi_0)}|_{g}: \gamma \subset \bar{\Omega} \text{ geodesic for the metric } g = c^{-2}dx^2,\\
%& \text{issued from $(x_0,\xi_0)\in S^*\mathcal{K}$} \}
%\end{aligned}
%\end{equation*}
%is finite. 
the visibility of all singularities is guaranteed provided we take enough time to record the boundary data. Consequently, assuming the measurement time $T>T_0(\Omega)$ means we are able to detect both parts of every singularity produced by the initial condition, or in other words, both signals originated by any singularity are ``visible'' in time less than $T$. 
%This condition is analogous to assumption (2.9) in \cite{TATbrain} for the undamped wave equation. 
%In contrast to the full data case addressed in \cite{Homan} and \cite{TAT}, we need a stronger assumption in the support of $f$ in order to be able to study each singularity separately and then glue them together by a compactness argument.

The main result is the following
\begin{theorem}\label{NS_free_space}
Assume \text{\rm ($\Omega$, $c^{-2}dx^2)$} is strictly convex, and $T_0(\Omega)<T<\infty$.
Then ${\bf K}_a$ is a contraction in $\mathcal{H}(\Omega)$ and we get the following reconstruction formula for the thermoacoustic problem
$$\ff = \sum^\infty_{m=0}{\bf K}_a^m{\bf A}_ah\quad h:=\Lambda_a\ff.$$
\end{theorem}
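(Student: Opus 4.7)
The plan is to derive an exact energy identity for $\|\mathbf{K}_a\mathbf{f}\|_{\mathcal{H}(\Omega)}^2$, then show that the resulting ``loss'' terms dominate a fixed positive fraction of $\|\mathbf{f}\|_{\mathcal{H}}^2$ up to a compact remainder, and finally upgrade the estimate to a strict norm bound via a Fredholm-type argument. I would start by differentiating $E_\Omega(\mathbf{w}(t))$ using the equation $(\partial_t^2-c^2\Delta)w=-a(u_t+v_t)$ and the Dirichlet condition $w|_{\partial\Omega}=0$. Since $w=u-v$ in $(0,T)\times\Omega$, the source pairs with $w_t=u_t-v_t$ to give the difference of squares $|u_t|^2-|v_t|^2$, so integration over $(0,T)$ gives
$$
E_\Omega(\mathbf{w}(0)) \;=\; E_\Omega(\mathbf{w}(T)) \;+\; 2\int_0^T\!\int_\Omega ac^{-2}\bigl(|u_t|^2-|v_t|^2\bigr)\,dx\,dt.
$$
The orthogonal decomposition $u(T)=(u(T)-Ph(T))+Ph(T)$ in the Dirichlet inner product yields $E_\Omega(\mathbf{w}(T))=E_\Omega(\mathbf{u}(T))-\int_\Omega|\nabla Ph(T)|^2$, and the extended energy conservation $\mathcal{E}_{\RR^n}(\mathbf{u},T)=\|\mathbf{f}\|_\mathcal{H}^2$ together with $\operatorname{supp}a\subset\overline{\Omega}$ collapses everything to
$$
\|\mathbf{K}_a\mathbf{f}\|_\mathcal{H}^2 \;=\; \|\mathbf{f}\|_\mathcal{H}^2 \;-\; E_{\RR^n\setminus\overline{\Omega}}(\mathbf{u}(T)) \;-\; \int_\Omega|\nabla Ph(T)|^2 \;-\; 2\int_0^T\!\int_\Omega ac^{-2}|v_t|^2\,dx\,dt.
$$
Each subtracted quantity is non-negative, which already gives the crude bound $\|\mathbf{K}_a\|\leq 1$.

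The next step is the microlocal contraction. Following the strategy of \cite{TATbrain}, I would decompose $\mathbf{f}$ by pseudodifferential cutoffs into a high-frequency part plus a lower-order remainder and insert a geometric-optics ansatz for $u$. Tracking the principal symbol along both bicharacteristic branches $\gamma_{(x,\pm\xi)}$ of the metric $c^{-2}dx^2$, the damping enters the transport equation as an exponential factor $\exp\bigl(-\tfrac12\int_0^t a(\gamma(s))c(\gamma(s))\,ds\bigr)$. Strict convexity excludes glancing rays and, because $T>T_0(\Omega)$, each branch leaves $\overline{\Omega}$ transversally before time $T$; consequently a definite fraction of the high-frequency energy is either radiated into $\RR^n\setminus\overline{\Omega}$ or absorbed inside $\overline{\Omega}$ by $a$. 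For the time reversal I would switch to $\tilde v(s,x)=v(T-s,x)$, which satisfies a forward damped wave equation with boundary data $h(T-\cdot)$ and Cauchy data $[Ph(T),0]$ at $s=0$; its principal symbol travels along the same geodesics with the same exponential attenuation, so the integral $\int ac^{-2}|v_t|^2$ captures the reconstruction loss along each ray. Summing the two damping-induced contributions microlocally produces a lower bound
$$
E_{\RR^n\setminus\overline{\Omega}}(\mathbf{u}(T)) \;+\; 2\int_0^T\!\int_\Omega ac^{-2}|v_t|^2 \;\geq\; \epsilon\,\|\mathbf{f}\|_\mathcal{H}^2 \;-\; C\|\mathbf{f}\|_*^2
$$
for some $\epsilon>0$ and a weaker norm $\|\cdot\|_*$ compactly embedded in $\mathcal{H}(\Omega)$.

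The hard part, flagged by the author, is handling the new term $\int a|v_t|^2$: since $v$ is defined through a backward IBVP, its microlocal description requires matching amplitudes through the boundary trace $h$ and through the harmonic extension $Ph(T)$, while checking that the exponential attenuation picked up during reconstruction combines additively with, rather than cancels, the loss along the outgoing branch of $u$. Once the microlocal estimate is in place, substituting it into the exact identity yields $\|\mathbf{K}_a\mathbf{f}\|^2\leq(1-\epsilon)\|\mathbf{f}\|^2+C\|\mathbf{f}\|_*^2$, so that $\mathbf{K}_a^*\mathbf{K}_a-(1-\epsilon)\mathrm{Id}$ is compact on $\mathcal{H}(\Omega)$. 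If $\lambda\geq 1$ were in the spectrum of $\mathbf{K}_a^*\mathbf{K}_a$ with eigenvector $\mathbf{f}$, then the bound $\|\mathbf{K}_a\|\leq 1$ would force $\lambda=1$ and all three loss terms to vanish on $\mathbf{f}$: $|\nabla Ph(T)|\equiv 0$ makes $Ph(T)$ constant, $av_t\equiv 0$ combined with the backward equation forces $v$ to be essentially stationary on $\operatorname{supp}a$, and the vanishing exterior energy forces $u(T)=u_t(T)=0$ on $\RR^n\setminus\overline{\Omega}$. Tracing the boundary conditions then gives $h\equiv 0$, i.e.\ $\Lambda_a\mathbf{f}=0$, and the uniqueness of $\Lambda_a$ for $T>T_0(\Omega)$ (\cite[Theorem 3.1]{Homan}) yields $\mathbf{f}=0$. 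Hence $\|\mathbf{K}_a\|<1$, the Neumann series $\sum_{m\geq 0}\mathbf{K}_a^m\mathbf{A}_ah$ converges absolutely in $\mathcal{H}(\Omega)$, and because $\mathrm{Id}-\mathbf{K}_a=\mathbf{A}_a\Lambda_a$ its sum equals $\mathbf{f}$.
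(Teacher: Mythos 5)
Your overall strategy is the same as the paper's: an exact energy identity for $\|{\bf K}_a\ff\|^2_{\mathcal{H}(\Omega)}$ (the paper only writes the inequality $E_\Omega(\ww(0))\leq\mathcal{E}_\Omega(\uu,T)$, but your identity is the sharp version of the same computation and is correct), followed by a microlocal lower bound on the transmitted energy and a compactness/uniqueness argument to remove the remainder. However, there are concrete gaps. The central one is your dichotomy that ``a definite fraction of the high-frequency energy is either radiated into $\RR^n\setminus\overline{\Omega}$ or absorbed inside $\overline{\Omega}$ by $a$.'' That statement is true but useless: the absorbed energy $2\int ac^{-2}|u_t|^2$ does not appear in your identity at all (it cancels through the extended energy conservation), so if all the energy were absorbed and none radiated, then $h$ would be smooth/small, hence $v$ and $\int ac^{-2}|v_t|^2$ would be small, $E_{\Omega^c}(\uu(T))$ would be small, and every loss term would degenerate --- no contraction. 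What must be proved, and what the paper's Proposition 2 does prove, is that the exterior energy \emph{alone} dominates: $\|\ff\|^2_{\mathcal{H}(\Omega)}\leq C\,E_{\Omega^c}(\uu(T))$. The mechanism is that along each bicharacteristic the leading amplitude is attenuated only by the strictly positive factor $\exp\bigl(-\tfrac12\int a\,ds\bigr)$ (this is why $C$ blows up as $\|a\|_\infty\to\infty$ but the result holds for arbitrary $a$), the DN maps $N_\pm$ are elliptic in the hyperbolic region because strict convexity forces non-tangential exit, and G\aa rding plus the stability estimate \eqref{Homan_stability} give the bound modulo a compact error. Once you have this, the term $\int ac^{-2}|v_t|^2$ is simply dropped (favorable sign); no microlocal analysis of the backward problem for $v$ is needed, so the ``hard part'' you flag is a red herring.

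Two further points in your endgame need repair. First, the operator inequality $\|{\bf K}_a\ff\|^2\leq(1-\epsilon)\|\ff\|^2+C\|\ff\|_*^2$ does \emph{not} imply that ${\bf K}_a^*{\bf K}_a-(1-\epsilon)\mathrm{Id}$ is compact (a one-sided bound by a compact operator never gives compactness); what it gives is that the spectrum of ${\bf K}_a^*{\bf K}_a$ in $(1-\epsilon,1]$ consists of finitely many eigenvalues of finite multiplicity above any level $1-\epsilon+\delta$, which is enough for your argument but must be stated that way. Second, the step ``vanishing exterior energy forces $\uu(T)=0$ on $\RR^n\setminus\overline{\Omega}$, and tracing the boundary conditions gives $h\equiv0$'' is unjustified: finite speed of propagation from the two ends $t=0$ and $t=T$ only forces $u$ to vanish outside a lens-shaped region and says nothing about $u|_{(0,T)\times\partial\Omega}$. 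To conclude $\ff=0$ from $\uu(T)|_{\Omega^c}=0$ one needs Tataru's unique continuation theorem together with $T>T_0(\Omega)>2T_1(\Omega)$ to propagate the vanishing to a full neighborhood of a time slice and then solve backward --- exactly the argument the paper carries out, and which you must import; citing only the uniqueness of $\Lambda_a$ does not suffice because you have not yet shown $\Lambda_a\ff=0$.
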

\bigskip
%\newpage
%%%%%%%%%%%%%%%%%%%%%%%%%%%%%%%%%%%%%%%%%%
\centerline{\sc 4. Geometric Optics}

\noindent{\bf 4.1 Parametrix for the Cauchy problem.}
The proof of Theorem \ref{NS_free_space} is based on energy estimates for the high frequency part of the solution where the analysis is done by constructing approximate solutions localized near null bicharacteristics. Homan in \cite{Homan} got a parametrix solution for \eqref{general IBVP} in the case of $\ff = (f,-af)$, following an standard argument. We need though a parametrix for the general case $\ff = (f_1,f_2)$. The construction below is based on \cite[\S 4.1]{TATbrain} where the only difference resides in the transport equations \eqref{transport}.%We decided to include here for the sake of completeness in the exposition. 

We are looking for a solution of the form
%$$u(t,x) = (2\pi)^{-n}\sum_{\sigma=\pm}\int e^{i\phi^\pm(t,x,\eta)}A^\pm(t,x,\eta)\hat{f}(\eta)d\eta.$$
$$u(t,x) = (2\pi)^{-n}\sum_{\sigma=\pm}\int e^{i\phi^\sigma(t,x,\eta)}\big(A^\sigma_1(t,x,\eta)\hat{f}_1(\eta) + |\eta|^{-1}A^\sigma_2(t,x,\eta)\hat{f}_2(\eta)\big)d\eta,$$
where $\hat{f}_i$ stands for the Fourier transform of $f_i$.
To find the equations that $\phi^{\pm}$ and $A^\pm_{j}$, j=1,2, must satisfy we first compute:
%$$\square_a u = (2\pi)^{-n}\sum_{\sigma=\pm}\int e^{i\phi^\sigma}\big(I^\sigma_{0} + I^\sigma_{1} + I^\sigma_{2}\big)\hat{f}d\eta$$
$$\square_a u = (2\pi)^{-n}\sum_{\sigma=\pm}\int e^{i\phi^\sigma}\big([I^\sigma_{1,0} + I^\sigma_{1,1} + I^\sigma_{1,2}]\hat{f}_1 + |\eta|^{-1}[I^\sigma_{2,0} + I^\sigma_{2,1} + I^\sigma_{2,2}]\hat{f}_2\big)d\eta$$
where
%\begin{align*}
%I^\sigma_{2} &= - A^\sigma\big((\partial_t\phi^\sigma)^2 - c^2|\nabla_x\phi^\sigma|^2\big),\\
%I^\sigma_{1} &= 2\text{i}[(\partial_t\phi^\sigma)(\partial_tA^\sigma) - c^2\nabla_x\phi^\sigma\cdot\nabla_xA^\sigma] + \text{i}A^\sigma\square_a\phi^\sigma,\\
%I^\sigma_{0} &= \square_aA^\sigma.
%\end{align*}
\begin{align*}
I^\sigma_{j,2} &= - A_j^\sigma((\partial_t\phi^\sigma)^2 - c^2|\nabla_y\phi^\sigma|^2),\\
I^\sigma_{j,1} &= 2i[(\partial_t\phi^\sigma)(\partial_tA^\sigma_j) - c^2\nabla_y\phi^\sigma\cdot\nabla_yA^\sigma_j] + iA^\sigma_j\square_a\phi^\sigma,\\
I^\sigma_{j,0} &= \square_aA^\sigma_j.
\end{align*}
Considering classical amplitude functions given by the asymptotic expansions
$$A^\sigma_j(t,x,\eta)\sim \sum_{k\geq 0}A^\sigma_{j,k}(t,x,\eta),\quad \sigma=\pm,$$
with $A^\sigma_{j,k}$ homogeneous of degree $-k$ in $\eta$, we would like to find those $A^\sigma_{j,k}$ so that $u$ solves \eqref{general IBVP} up to a smooth terms. We then choose $\phi^\sigma$ and $A^\sigma_{j,k}$ so that the terms of same order of homogeneity in $\eta$ cancel each other. The phase functions must satisfy the eikonal equation which takes into account the second order terms $I^\sigma_{j,2}$, and we endow it with initial conditions
\begin{align}\label{eikonal}
\left\{\begin{matrix} \mp\partial_t\phi^\pm &=& c|\nabla_x\phi^\pm|\\ \phi^\pm|_{t=0} &=& x\cdot\eta.\end{matrix}\right.
\end{align}
Consequently, we obtain $I^\sigma_{j,2} = 0$.

To get rid of the next terms with less order of homogeneity we have to solve transport equations. We define the vector field
\begin{equation}\label{transport_op}
X^\sigma = 2(\partial_t\phi^\sigma)\partial_t - 2c^2\nabla_x\phi^\sigma\cdot\nabla_x.
\end{equation}
Then, the coefficients of both amplitude functions must satisfy 
%\begin{equation}\label{transport}
%X^\sigma A^\sigma_{0} + A^\sigma_{0}\square_a\phi^\sigma = 0,\quad\text{and}\quad X^\sigma A^\sigma_{k} + A^\sigma_{k}\square_a\phi^\sigma = - \square_aA^\sigma_{k-1} \text{ for }k\geq 1.
%\end{equation}
\begin{equation}\label{transport}
X^\sigma A^\sigma_{j,0} + A^\sigma_{j,0}\square_a\phi^\sigma = 0,\quad\text{and}\quad X^\sigma A^\sigma_{j,k} + A^\sigma_{j,k}\square_a\phi^\sigma = - \square_aA^\sigma_{j,k-1} \text{ for }k\geq 1.
\end{equation}
Since we must have that $u|_{t=0} = f_1$, this is
%$$f(x) = (2\pi)^{-n}\int e^{ix\cdot\eta}(A^+ + A^-)|_{t=0}\hat{f}(\eta)d\eta,$$
$$f_1(x) = (2\pi)^{-n}\int e^{ix\cdot\eta}\big((A^+_1 + A^-_1)|_{t=0}\hat{f}_1(\eta) + |\eta|^{-1}(A^+_2 + A^-_2)|_{t=0}\hat{f}_2(\eta) \big)d\eta,$$
we need the condition 
%$$A^+ + A^- = 1,\quad \text{at }t=0.$$
$$A^+_1 + A^-_1 = 1,\quad A^+_2 + A^-_2=0,\quad \text{at }t=0.$$Analogously, from $u_t|_{t=0} = f_2$ and since $\phi^\sigma$ satisfies \eqref{eikonal}, we have
%\begin{align*}
%-af(x) &= (2\pi)^{-n}\int e^{ix\cdot\eta}\big(ic|\eta|(-A^+ + A^-) + \partial_t(A^+ + A^-)\big)|_{t=0}\hat{f}(\eta)d\eta,
%\end{align*}
\begin{align*}
f_2(x) &= (2\pi)^{-n}\int e^{ix\cdot\eta}\big([ic|\eta|(-A^+_1 + A^-_1) + \partial_t(A^+_1 + A^-_1)]|_{t=0}\hat{f}_1(\eta)\\
&\hspace{7em}+[ic(-A^+_2 + A^-_2) +|\eta|^{-1}\partial_t(A^+_2 + A^-_2)]|_{t=0}\hat{f}_2(\eta) \big)d\eta,
\end{align*}
thus, at $t=0$ we require
%$$ic|\eta|(-A^+ + A^-) + \partial_t(A^+ + A^-) = -a,\quad\text{ at }t=0.$$
$$ic|\eta|(-A^+_1 + A^-_1) + \partial_t(A^+_1 + A^-_1) = 0,\quad ic(-A^+_2 + A^-_2) +|\eta|^{-1}\partial_t(A^+_2 + A^-_2) = 1.$$
We then consider initial conditions given by the following system at $t=0$ which can be solved iteratively:
%\begin{equation}\label{IC_amplitud1}
%\left\{\begin{array}{rcl} A^+_{0} + A^-_{0} &=& 1\\ A^+_{1} + A^-_{1} &=& 0\\  A^+_{k} + A^-_{k} &=&0,\; k\geq 2  \end{array}\right.\hspace{.5em}\left\{\begin{array}{rcl} A^+_{0} - A^-_{0} &=& 0\\ A^+_{1} - A^-_{1} &=& -\frac{i}{c|\eta|}\big(a + \partial_t(A^+_{k-1} + A^-_{k-1})\big)\\  A^+_{k} - A^-_{k} &=& -\frac{i}{c|\eta|}\partial_t(A^+_{k-1} + A^-_{k-1}),\;\;k\geq2.\end{array}\right.
%\end{equation}
\begin{equation}\label{IC_amplitud1}
\left\{\begin{array}{rcl} A^+_{1,0} + A^-_{1,0} &=& 1\\  A^+_{1,k} + A^-_{1,k} &=&0,\; k\geq 1  \end{array}\right.\hspace{.5em}\left\{\begin{array}{rcl} A^+_{1,0} - A^-_{1,0} &=& 0\\  A^+_{1,k} - A^-_{1,k} &=& ic^{-1}|\eta|^{-1}\partial_t(A^+_{1,k-1} + A^-_{1,k-1}),\;k\geq1,\end{array}\right.
\end{equation}
\begin{equation}\label{IC_amplitud2}
\left\{\begin{array}{rcl} A^+_{2,0} + A^-_{2,0} &=& 0\\  A^+_{2,k} + A^-_{2,k} &=&0,\; k\geq 1  \end{array}\right.\hspace{.5em} \left\{\begin{array}{rcl} A^+_{2,0} - A^-_{2,0} &=& i/c\\  A^+_{2,k} - A^-_{2,k} &=& ic^{-1}|\eta|^{-1}\partial_t(A^+_{2,k-1} + A^-_{2,k-1}),\;k\geq1.\end{array}\right.
\end{equation}
We solve the transport equations in \eqref{transport} on integral curves of $X^\sigma$ as longs as the eikonal equation \eqref{eikonal} is solvable, and imposing the initial conditions from \eqref{IC_amplitud1} and \eqref{IC_amplitud2}. In particular, at $t=0$, the leading term are given by $A^+_{1,0} = A^-_{1,0} = \frac{1}{2}$, and $A^+_{2,0} = -A^-_{2,0} = \frac{i}{2c}$.

We can iteratively use the previous construction by solving the eikonal equation in small increments on time and get parametrices $u_+, u_-$ defined on $[0,T]$. In addition, by assuming the wave front set of $\ff$ lies inside a small conical neighborhood of some $(x_0,\xi_0)\in S^*\bar{\Omega}$, their supports can be assumed to be contained in small neighborhoods of the respective branches of the geodesics issued from $(x_0,\xi_0)$, and their wave front sets contained in small neighborhoods of the bicharacteristics issued from $(0,x_0,1,\xi_0)$ and $(0,x_0,1,-\xi_0)$ respectively. 

If we restrict the previous solution to the boundary we obtain an approximate representation (up to smooth term) of the measurement operator $\Lambda_a$ as a sum of Fourier Integral Operators, this is $\Lambda_a\ff \cong \Lambda_a^+\ff + \Lambda_a^-\ff$, where
%\begin{align*}
%[\Lambda_a^{\pm}f](t,x) &=(2\pi)^{-n}\int e^{i\phi^\pm(t,x,\eta)}A^\pm(t,x,\eta)\hat{f}(\eta)d\eta\Big|_{\partial\Omega}.
%\end{align*}
\begin{align*}
[\Lambda_a^{\pm}\ff](t,x) &= [\Lambda^\pm_{a,1}f_1 + \Lambda^\pm_{a,2}f_2](t,x)\\
&=(2\pi)^{-n}\int e^{i\phi^\pm(t,x,\eta)}\big(A^\pm_1(t,x,\eta)\hat{f}_1(\eta) + |\eta|^{-1}A^\pm_2(t,x,\eta)\hat{f}_2\big)d\eta\Big|_{\partial\Omega}.
\end{align*}
It follows from the same arguments as in \cite{Homan} and \cite{TAT} that their canonical relations are of graph type. In consequence, since $\Lambda^\pm_{a,1}$ are FIOs of order 0 and $\Lambda^\pm_{a,2}$ are of order -1, writing $h = \Lambda_a\ff$, we get the estimate
\begin{equation}\label{FIO_reg}
\|h\|_{H^{0}([0,T]\times\partial\Omega)}\leq C\|\ff\|_{H^{0}(\Omega)\times H^{-1}(\Omega)}.
\end{equation}
%, and moreover $\Lambda^{\pm}_{a,1}$ are FIO's of order 0, while $\Lambda^{\pm}_{a,1}$ are of order -1.
%Their canonical relations restricted to $T^*\Omega\times T^*((0,T)\times\partial\Omega)$ are given locally by the graph of the maps
%$$(y_0,\eta_0)\mapsto (\tau_{y_0,\pm\eta_0},\gamma_{y_0,\pm\eta_0}(\tau_{y_0,\pm\eta_0}))$$
%(see \cite{Homan} and \cite{TAT} for more details).
Moreover, following the same arguments as in \cite[Theorem 3.2]{Homan}, we have the stability estimate:
\begin{proposition}[Stability]\label{stability} Assume $T_0(\Omega)<T<\infty$. Then
\begin{equation}\label{Homan_stability}
\|\ff\|_{\mathcal{H}(\Omega)}\leq C\|\Lambda_a\ff\|_{H^1([0,T]\times\partial\Omega)},\quad\forall \ff\in \mathcal{H}(\Omega).
\end{equation}
\end{proposition}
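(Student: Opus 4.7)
The plan is to follow the scheme of \cite[Theorem 3.2]{Homan}: first establish a stability estimate modulo a compact remainder using the geometric-optics representation of Section 4.1, then absorb the remainder by the injectivity of $\Lambda_a$. Both ingredients — microlocal ellipticity of the measurement FIO and a uniqueness statement — require $T>T_0(\Omega)$, the first because both branches of every bicharacteristic through $T^*\Omega$ must be recorded, the second because of the time-asymmetry of the damped equation.

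First I would exploit the parametrix decomposition $\Lambda_a \cong \Lambda_a^+ + \Lambda_a^-$ constructed above, where each $\Lambda_a^\pm$ is a sum of FIOs with canonical relations of graph type (of order $0$ on $f_1$ and of order $-1$ on $f_2$). Because $T > T_0(\Omega)$, both null bicharacteristics issued from every $(0,x_0,1,\pm\xi_0)$ with $(x_0,\xi_0)\in S^*\bar{\Omega}$ hit $(0,T)\times\partial\Omega$ transversally before the final time. Together with the nonvanishing leading amplitudes $A^\pm_{1,0}=\tfrac{1}{2}$ and $A^\pm_{2,0}=\pm\tfrac{i}{2c}$ computed in Section 4.1, this renders each branch microlocally elliptic on $T^*\Omega\setminus 0$. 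Inverting each branch separately (they have disjoint microlocal images near their respective bicharacteristic strips) produces a continuous left parametrix $Q_a: H^1([0,T]\times\partial\Omega) \to \mathcal{H}(\Omega)$ with $Q_a\Lambda_a = \mathrm{Id} + R_a$, where $R_a$ is smoothing, hence compact, on $\mathcal{H}(\Omega)$.

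Combining the mapping property $\|Q_a h\|_{\mathcal{H}(\Omega)} \le C\|h\|_{H^1([0,T]\times\partial\Omega)}$ with this identity yields the stability-modulo-compact inequality
\begin{equation*}
\|\ff\|_{\mathcal{H}(\Omega)} \;\le\; C\,\|\Lambda_a\ff\|_{H^1([0,T]\times\partial\Omega)} \;+\; C\,\|\ff\|_{L^2(\Omega)\times H^{-1}(\Omega)}.
\end{equation*}
Since the inclusion $\mathcal{H}(\Omega) = H^1_0(\Omega)\oplus L^2(\Omega) \hookrightarrow L^2(\Omega)\oplus H^{-1}(\Omega)$ is compact, the standard contradiction argument applies: if \eqref{Homan_stability} failed, one would extract a sequence $\ff_n$ with $\|\ff_n\|_{\mathcal{H}}=1$ and $\|\Lambda_a\ff_n\|_{H^1}\to 0$, pass to a subsequence converging strongly in the weaker norm, and obtain a nonzero limit $\ff_\infty \in \mathcal{H}(\Omega)$ with $\Lambda_a\ff_\infty=0$.

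The main obstacle is therefore the injectivity of $\Lambda_a$ on $\mathcal{H}(\Omega)$, and this is exactly what forces the hypothesis $T>T_0(\Omega)$ rather than the weaker $T>T_0(\Omega)/2$ sufficient in the non-attenuated case of \cite{TAT}. The damped equation \eqref{general IBVP} cannot be extended to $t<0$ by symmetry, so one cannot collapse the unique continuation argument into a half-time statement. Fortunately the required uniqueness is already available from \cite[Theorem 3.1]{Homan} under precisely the hypothesis $T>T_0(\Omega)$, and invoking it closes the argument.
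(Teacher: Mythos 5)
Your proposal follows essentially the same route the paper intends: the paper gives no independent proof and simply defers to the argument of \cite[Theorem 3.2]{Homan}, namely parametrix ellipticity for $T>T_0(\Omega)$ giving stability modulo a compact remainder, which is then absorbed via injectivity of $\Lambda_a$ (unique continuation for the damped equation) and the compact embedding $\mathcal{H}(\Omega)\hookrightarrow L^2(\Omega)\times H^{-1}(\Omega)$. One small imprecision: neither branch $\Lambda_a^{\pm}$ is left-invertible on its own (each sends two unknowns to one function), so the correct step is to microlocally separate $h_+$ and $h_-$ and then invert the resulting $2\times 2$ elliptic system, whose symbol determinant is nonzero because $A^{\pm}_{1,0}=\tfrac{1}{2}$ and $A^{\pm}_{2,0}=\pm\tfrac{i}{2c}$ --- which is precisely why both branches, and hence $T>T_0(\Omega)$, are required, as you correctly observe.
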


\begin{remark}\label{remark0}
From the assumption on the wave front set of $\ff$, we can assume that $\Lambda^+_a\ff$ and $\Lambda^-_a\ff$ have disjoint wave front set which is a consequence of the fact that bicharacteristics don't self intersect.
%{\color{LimeGreen}We point out that $\Lambda^+_a\ff$ and $\Lambda^-_a\ff$ where constructed independently based on the fact that bicharacteristics don't self intersect. However, the associated geodesic might have conjugate points but this is not a problem in the analysis below.}
\end{remark}

%\begin{remark}
%This results follows from \cite[Theorem 3.2]{Homan} and is also valid in the case of $\ff\in\mathcal{H}(\Omega)$ assuming that $T_0(\Omega)<T<\infty$.
%\end{remark}
\bigskip
\noindent{\bf 4.2 Parametrix at the boundary. }
We now want to have a pseudodifferential representation of the Dirichlet-to-Neumann map. Let's pick any $(x_0,\xi_0)\in S^*\bar{\Omega}$ and denote by 
$(t_1,x_1,1,\xi_1)\in T^*(\RR\times\Omega)$ the point in phase space where the bicharacteristic issued from $(0,x_0,1,\xi_0)$ hits the boundary. We also denote by $(t_1,x_1,1,(\xi_1)')$ its projection onto $T^*(\RR\times\partial\Omega)$. Close to the boundary and in a neighborhood of $x_1$ we choose local coordinates $x=(x',x^n)$ such that $\partial\Omega$ is given by $x^n=0$ and $x^n>0$ in $\RR^n\setminus\Omega$. The following analysis can also be done for the other bicharacteristic issued from $(0,x_0,1,-\xi_0)$.

Consider a compactly supported distribution $h$ on $\RR\times\partial\Omega$ with $WF(h)$ contained in a small conic neighborhood of $(t_1,x_1,1,(\xi^1)')$. As in \cite{TATbrain} we can get an approximate solution of \eqref{TAT_eq} outside $\Omega$ for the transmitted wave with positive wave speed $c(x)|\xi|$, as an FIO applied to $h$. Since $\partial \Omega$ is an invisible boundary for the forward wave, such transmitted wave would be the same as the solution $u_+$ defined above. The transmitted wave related to the positive wave speed has the form
\begin{equation}\label{bdary_parametrix}
u^+_T = (2\pi)^{-n}\int e^{i\varphi_+(t,x,\tau,\xi')}b_+(t,x,\tau,\xi')\hat{h}_+(\tau,\xi')d\tau d\xi',
\end{equation}
where $\hat{h}_+ = \int_{\RR\times\RR^{n-1}}e^{-i(- t\tau + x'\cdot\xi')}h(t,x')dtdx'$. In particular, the phase functions $\varphi_+$ must satisfy the eikonal equation plus boundary conditions on $x^n=0$:
\begin{equation}\label{eikonal+bc for +}
\partial_t\varphi_+ + c(x)|\nabla_{x}\varphi_+| = 0,\quad \varphi_+|_{x^n=0} = - t\tau + x'\cdot\xi',
\end{equation}
where the choice of sign in the eikonal equation is such in order to agree with $\phi^+$ in \eqref{eikonal}. 
In the case of the negative sound speed, the transmitted wave $u^-_T$ has the same form \eqref{bdary_parametrix} but  interchanging $\hat{h}_+$ with $\hat{h}_-= \int_{\RR\times\RR^{n-1}}e^{-i(+ t\tau + x'\cdot\xi')}h(t,x')dtdx'$ and $\varphi_+$ with $\varphi_-$ which satisfies
\begin{equation}\label{eikonal+bc for -}
-\partial_t\varphi_- + c(x)|\nabla_{x}\varphi_-| = 0,\quad \varphi_-|_{x^n=0} =  t\tau + x'\cdot\xi'.
\end{equation}
As in the previous construction we consider the amplitude functions to be classical, this is $b_{\pm} \sim \sum_{k\geq 0}b^{\pm}_k$ with $b^{\pm}_k$ homogeneous of degree $-k$ in $\tau$ and $\xi'$. It then follows that $b_{\pm}$ satisfy analogous equations to \eqref{transport} with boundary conditions 
$$b^{\pm}_0 = 1,\quad b^{\pm}_k=0,\; k\geq 1,\quad \text{at } x^n = 0.$$

We use the previous to locally define the Dirichlet-to-Neumann (DN) maps for the positive and negative wave speed as the following $\Psi$DOs of order 1 (this is due to \eqref{eikonal+bc for +})
$$N_{\pm}:h\mapsto \frac{\partial u^{\pm}_T}{\partial x^n}\Big|_{\RR\times\partial\Omega}$$ 
This definitions are local since they depend on the choice of local boundary coordinates.
From \eqref{bdary_parametrix} we get their principal symbols are the same and given by
$$\sigma(N_\pm) = \text{i}\frac{\partial \varphi_\pm}{\partial x^n}\Big|_{\RR\times\partial\Omega}  =\text{i}\sqrt{c^{-2}\tau^2 - |\xi'|^2},$$
thus they are elliptic in the hyperbolic conic set $c^{-1}|\tau|>|\xi'|$.
%, and moreover $\sigma(N) = 2\text{i}\sqrt{c^{-2}\tau^2 - |\xi'|^2}$. 
Notice that in contrast with \cite{TATbrain}, since the boundary doesn't perturb the propagation of the wave there is no distinction between the incoming and outgoing DN maps.

\begin{remark}\label{remark1}
The previous construction agrees up to smooth terms with the approximate solutions for the Cauchy problem $u_+$ and $u_-$, in a neighborhood of the boundary, when we take $h = \Lambda^{\pm}_a\ff$ respectively.
\end{remark}
\bigskip

%%%%%%%%%%%%%%%%%%%%%%%%%%%%%%%%%%%%%%%%%%
%\noindent\subsection*{Proof of Theorem \ref{NS_free_space}}
\centerline{\sc 5. Proof of Theorem \ref{NS_free_space}}

By multiplying \eqref{error_function} by $w_t = (u_t - v_t)$ and integrating on $(0,T)\times\Omega$ we get that the local energy of $w$ satisfies 
\begin{align*}E_\Omega(\ww(0)) &= E_\Omega(\ww(T)) + 2\int_{[0,T]\times\Omega} ac^{-2}(u_t+v_t)(u_t - v_t)dtdx\\
&= E_\Omega(\ww(T)) + 2\int_{[0,T]\times\Omega}  ac^{-2}|u_t|^2dtdx - 2\int_{[0,T]\times\Omega} ac^{-2}|v_t|^2dtdx\\
&\leq \|[u^T-\phi,u_t^T]\|^2_{\mathcal{H}(\Omega)}+ 2\int_0^T\int_\Omega ac^{-2}|u_t|^2dtdx.
\end{align*}
Moreover, since $\phi$ is harmonic and $u^T|_{\partial\Omega} = \phi|_{\partial\Omega}$ we have
$$(u^T - \phi,\phi)_{H_D(\Omega)} = 0,$$
which implies 
$$\|[u^T - \phi,u_t^T]\|^2_{\mathcal{H}(\Omega)} = E_\Omega(\uu(T))-\|\phi\|²_{H_D(\Omega)}.$$
Then
\begin{align}\label{K_ineq}
E_\Omega(\ww(0))\leq \mathcal{E}_\Omega(\uu,T).
\end{align}

We now state the main step in the proof of the theorem which says that after time $T>T_0(\Omega)$ a considerable part of the energy is outside the domain. In the enclosure case, this is when boundary conditions are imposed, a similar proposition would have to be proven by considering an estimate relating the initial energy and the energy absorbed on the boundary.
\begin{proposition} Let $\uu$ be a solution of \eqref{general IBVP} with initial condition $\ff\in\mathcal{H}(\Omega)$. There exists $C>1$ so that
$$\|\ff\|^2_{\mathcal{H}(\Omega)} \leq CE_{\Omega^c}(\uu(T)).$$
\end{proposition}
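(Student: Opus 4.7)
The plan is to derive this lower bound by a microlocal propagation-of-energy argument based on the parametrix of Section 4.1. The key geometric input is that $T>T_0(\Omega)$ forces every bicharacteristic starting in $S^*\overline\Omega$ to exit $\overline\Omega$ strictly before time $T$; since $c\equiv 1$ and $a\equiv 0$ outside $\Omega$, the energy carried by each exited ray propagates freely and remains in $\Omega^c$ up to time $T$, while inside $\Omega$ the damping only attenuates amplitudes by a factor bounded below (as $a$ is bounded and each transit takes time at most $T_0(\Omega)$).

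First I would microlocalize via a partition of unity in $S^*\overline\Omega$: it suffices to prove the estimate for $\ff$ with $WF(\ff)$ contained in a small conic neighborhood of a fixed $(x_0,\xi_0)\in S^*\overline\Omega$. The parametrix of Section 4.1 yields $\uu=\uu_+ + \uu_- + \uu_R$ modulo smoothing, with each $\uu_\sigma$ microlocally concentrated near the bicharacteristic issued from $(0,x_0,1,\sigma\xi_0)$. Since $|\gamma_\sigma|_g\leq T_0(\Omega)<T$, the point $\gamma_\sigma(T)$ lies in $\Omega^c$, and by Remark \ref{remark0} the wave front sets of $\uu_+(T)$ and $\uu_-(T)$ are disjoint. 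Choose small neighborhoods $U_\sigma\subset \Omega^c$ of $\gamma_\sigma(T)$ so that $U_+\cap U_-=\emptyset$.

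Next I would solve the transport equations \eqref{transport} along integral curves of $X^\sigma$ to derive a microlocal lower bound for $\uu_\sigma(T)|_{U_\sigma}$. The leading amplitudes $A^\sigma_{j,0}(T,\cdot)$ equal their initial values multiplied by a non-vanishing Jacobian factor (accounting for the divergence of the ray flow in the non-conjugate regime ensured by strict convexity) and by the attenuation factor
\begin{equation*}
\exp\!\bigg(-\tfrac{1}{2}\int_0^T a(\gamma_\sigma(s))\,ds\bigg),
\end{equation*}
both bounded below by positive constants depending only on $\|a\|_\infty$, $c$, $T$, and $\Omega$. Hence $\uu_\sigma(T)|_{U_\sigma}$ is given by an elliptic FIO applied to $\ff$, and ellipticity together with standard $L^2$-continuity for FIOs gives
\begin{equation*}
E_{U_+}(\uu_+(T)) + E_{U_-}(\uu_-(T)) \;\geq\; c_0\|\ff\|^2_{\mathcal{H}(\Omega)} \,-\, C\|\ff\|^2_{H^{1-\epsilon}(\Omega)\times H^{-\epsilon}(\Omega)}
\end{equation*}
for some $\epsilon>0$. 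Summing over the partition of unity and using $U_\pm\subset\Omega^c$ promotes the left-hand side to $E_{\Omega^c}(\uu(T))$.

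Finally, I would absorb the compact remainder by a contradiction-and-compactness argument: if the proposition failed, there would exist a normalized sequence $\ff_k\in\mathcal{H}(\Omega)$ with $E_{\Omega^c}(\uu_k(T))\to 0$; by Rellich a subsequence converges in $H^{1-\epsilon}\times H^{-\epsilon}$ to a limit $\ff_\infty$, and the previous inequality forces $\|\ff_\infty\|_{H^{1-\epsilon}\times H^{-\epsilon}}$ to be bounded below. Continuous dependence and $E_{\Omega^c}(\uu_\infty(T))=0$ give $u_\infty(T)=\partial_t u_\infty(T)=0$ on $\Omega^c$, which combined with finite propagation speed and unique continuation for the damped wave equation yields $\Lambda_a\ff_\infty=0$; the stability estimate \eqref{Homan_stability} then gives $\ff_\infty=0$, a contradiction. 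The main obstacle will be the transport-equation bookkeeping that produces the uniform lower bound on the propagated amplitude once the damping term in $\square_a\phi^\sigma$ has been incorporated; a secondary delicate point is ensuring that $\uu_+(T)$ and $\uu_-(T)$ contribute additively to $E_{\Omega^c}(\uu(T))$, which is supplied by the disjoint wave-front-set condition of Remark \ref{remark0}.
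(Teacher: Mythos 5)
Your argument is essentially correct, but it takes a genuinely different route through the key microlocal step. The paper never propagates the parametrix all the way to time $T$ in the exterior: instead it measures the energy that has left $\Omega$ as the flux $2\mathfrak{R}\int_{[0,T]\times\partial\Omega}u_t\,\partial_\nu\bar u$ across the boundary, rewrites this flux as $\sum_{\sigma=\pm}\mathfrak{R}(M_\sigma h_\sigma,h_\sigma)$ with $h_\sigma=\Lambda_a^\sigma\ff$ and $M_\sigma=-2N_\sigma^*P_t$ a second-order $\Psi$DO elliptic on the hyperbolic set (this is where strict convexity enters, via non-tangency of the exiting rays), applies G\aa rding's inequality to get $E_{\Omega^c}(\uu(T))\geq C_1\|h\|^2_{H^1}-C_2\sum\|h_\sigma\|^2_{H^0}$, and then converts $\|h\|_{H^1}$ back into $\|\ff\|_{\mathcal H(\Omega)}$ using the stability estimate \eqref{Homan_stability} and the order count \eqref{FIO_reg}. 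Your version replaces the boundary parametrix, the DN maps and the stability estimate by the ellipticity of the time-$T$ solution operator itself, restricted to disjoint exterior neighborhoods $U_\pm$ of the exit points; what it buys is that Section 4.2 and Proposition \ref{stability} become unnecessary for the main step, at the cost of having to verify that the $2\times2$ system of FIOs $(f_1,f_2)\mapsto(\uu_+(T)|_{U_+},\uu_-(T)|_{U_-})$ is elliptic (the amplitude matrix $A^\sigma_{j,0}$ is invertible at $t=0$ and transported by nonvanishing factors, including the damping factor $e^{-\frac12\int a}$, so this does hold). Both arguments then remove the compact remainder in exactly the same way, by injectivity of $\ff\mapsto\uu(T)|_{\Omega^c}$ via finite speed of propagation and Tataru's unique continuation together with compactness of $\mathcal H(\Omega)\hookrightarrow H^0\times H^{-1}$; your contradiction argument is the standard proof of the functional-analytic lemma the paper cites from Taylor.

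Two points in your write-up need repair, though neither is fatal. First, strict convexity of $\Omega$ does \emph{not} exclude conjugate points of the interior metric $c^{-2}dx^2$, so your ``non-vanishing Jacobian factor'' justification for ellipticity fails if you insist on a single global phase function; you must instead build the time-$T$ operator by composing the short-time parametrices (as the paper does when it ``iteratively uses the previous construction''), obtaining an elliptic FIO associated to the canonical graph of the bicharacteristic flow, for which the lower bound follows from $F^*F$ being an elliptic $\Psi$DO. Strict convexity is only needed to guarantee that exited rays do not re-enter $\Omega$ and that the boundary crossings are non-tangential. Second, the sentence ``summing over the partition of unity promotes the left-hand side to $E_{\Omega^c}(\uu(T))$'' hides the cross terms between different localized pieces $\uu_j$, whose exterior supports can overlap; you need the Egorov-type step (as in the paper) writing $e^{t{\bf P}_a}{\bf X}_j=\tilde{\bf X}_je^{t{\bf P}_a}$ modulo smoothing, so that each localized energy is bounded by $\|\uu(T)\|^2_{H^1(\Omega^c)\oplus L^2(\Omega^c)}$ and Poincar\'e's inequality on $B\setminus\Omega$ finally replaces that norm by $E_{\Omega^c}(\uu(T))$.
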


\begin{proof}
Recall that for any bounded domain $U$ with smooth boundary and for $t'\leq t\leq t''$ with $t'<t''$, if $\uu$ is a solution of the damped wave equation then
\begin{equation}\label{energy1}
\mathcal{E}_U(\uu,t'') = \mathcal{E}_U(\uu,t') + 2\mathfrak{R}\int_{[t',t'']\times\partial U}u_t\frac{\partial \overline{u}}{\partial\nu}dtdS,
\end{equation}
where $\nu$ is the outward normal unit-vector to $\partial U$. 

Let's assume for a moment that $WF(\ff)$ is contained in a conical neighborhood of some $(x_0,\xi_0)\in S^*\bar{\Omega}$ and as before we denote by $(t^\pm_1,x^\pm_1)$ the times and points where the respective branches of the geodesic issued from $(x_0,\xi_0)$ make contact with the boundary. We now want to estimate the energy transmitted outside $\Omega$ up to compact operator applied to $\ff$, and at time $t=T$. For a large ball $B$, the energy of the solution $\uu = e^{t{\bf P}_a}\ff$ of \eqref{TAT_eq} in $U = B\setminus\Omega$ is given by
\begin{equation}\label{energy2}
E_{\Omega^c}(\uu,t_2) = 2\mathfrak{R}\int_{[0,t_2]\times\partial \Omega}\frac{\partial u}{\partial t}\frac{\partial \overline{u}}{\partial\nu}dtdS.
\end{equation}
where there is no term at time $t=0$ since $\uu(0)$ vanishes outside $\Omega$. Here $\nu$ stands for the interior unit normal vector to $\partial\Omega$. Notice we used the hypothesis on the support of $a$ in order to have the equality $E_{\Omega^c}(\uu(t_{2})) = \mathcal{E}_{\Omega^c}(\uu,t_{2})$.

Let's denote by $h$ the Dirichlet data on $\partial\Omega$ given by $h = h_+ + h_-$, with $h_\pm = \Lambda^\pm_a\ff$, and recall Remark \ref{remark0}. We can use the construction near the boundary from the previous section and get a representation of the transmitted wave, $\uu_T = \uu^+_T + \uu^-_T$, which satisfies that $\uu_T\cong \uu$, with $\cong$ meaning equality up to a smoothing operator applied to $h$. Notice this parametrix solutions are only constructed in neighborhoods of $(t^\pm_1,x^\pm_1)$. For times outside this neighborhoods, we can approximate $\uu$ using the FIOs of section 4.1. Nevertheless, for such intervals of time we know the solution is smooth, therefore the more energetic part of $\uu$ is contained precisely in $\uu^+_T$ and $\uu^-_T$. Then   
we can estimate the RHS of \eqref{energy2} and get, modulo compact operators applied to $h_\pm$,
\begin{equation}\label{e1}
\begin{aligned}
E_{\Omega^c}(\uu,T) &\cong 2\mathfrak{R}\int_{[0,T]\times\partial \Omega}\frac{\partial u_T}{\partial t}\frac{\partial \overline{u}_T}{\partial\nu}dtdS\\
&\cong 2\mathfrak{R}(P_th,-(N_+ h^+ + N_- h^-))\\
& \cong \sum_{\sigma=\pm}\mathfrak{R}(-2N^*_\sigma P_th_\sigma,h_\sigma),% + (-2N^*_+ P_th_-,h_+) + (-2N^*_- P_th_+,h_-)},
\end{aligned}
\end{equation}
where $(\cdot,\cdot)$ stands for the inner product in $L^2(\RR\times\RR^{n-1})$. 
Notice there is no cross terms between the functions $h_\pm$ in the right hand side. This is because the wave front set of $N^*_\pm$ is contained in a small neighborhood of $(t_1^\pm,x^\pm_1,1,(\xi_1^\pm)')$, while the wave front set of $h_\mp$ lies close to $(t_1^\mp,x^\mp_1,1,(\xi_1^\mp)')$. Since both bicharacteristics don't intersect each other the above wave front sets are disjoint and consequently $N^*_\pm P_th_\mp$ are smooth functions. Therefore, those terms involve compact operators applied to the functions $h_\pm$.%Recalling also that $h_\pm = \Lambda^\pm_a\ff$, we get the inequalities
%$$(-2N^*_\pm P_th_\mp,h_\pm)\geq -C\|\ff\|^2_{H^{0}(\Omega)\times H^{-1}(\Omega)} - C\|h_\pm\|^2_{H^{0}(\RR\times\partial\Omega)}$$}

From the definition of the DN map and the above energy relation we deduce that
$$E_{\Omega^c}(\uu(T)) = \sum_{\sigma=\pm}\mathfrak{R}(M_\sigma h_\sigma,h_\sigma)$$
with $M_\pm$ two $\Psi$DOs of order 2 with the same principal symbol
\begin{align*}
\sigma_p(M_\pm) &= \sigma_p(-N^*_\pm P_t) = -2\cdot\overline{\text{i}\sqrt{c^{-1}\tau^2 - |\xi'|^2}}\cdot (-\text{i}\tau) \\
&= 2\tau\sqrt{c^{-2}\tau^2 - |\xi'|^2}.
\end{align*}
Notice that $h_{\pm}$ are compactly supported and as we mentioned before, their wave front sets are respectively contained in neighborhoods of the points $(t_1^\pm,x^\pm_1,1,(\xi_1^\pm)')$ where the respective bicharacteristic through $(0,x_0,1,\pm\xi_0)$ hits the boundary. Furthermore, their essential supports lie in the hyperbolic region  $c|\xi'|<\tau$ due to the strictly convexity of $\Omega$ which makes the bicharacteristics to cross $\partial\Omega$ non-tangentially. Then, $\sigma(M_\pm)\geq C|(\tau,\xi')|^2$ and we can
apply Garding's inequality to obtain
\begin{equation}\label{energy_ineq1+}
\begin{aligned}
E_{\Omega^c}(\uu(T)) &\geq C_1\sum_{\sigma=\pm}\|h_\sigma\|^2_{H^{1}(\RR\times\partial\Omega)} - C_2\sum_{\sigma=\pm}\|h_\sigma\|^2_{H^{0}(\RR\times\partial\Omega)}\\
&\geq C_1\|h\|^2_{H^{1}(\RR\times\partial\Omega)} - C_2\sum_{\sigma=\pm}\|h_\sigma\|^2_{H^{0}(\RR\times\partial\Omega)}.
\end{aligned}
\end{equation}

Let ${\bf X} =\text{diag}(X,X)$, with $X$ a zero order $\Psi$DO with essential support in a conic neighborhood of $(x_0,\xi_0)\in S^*\bar{\Omega}\setminus 0$, and such that ${\bf X}\ff$ vanishes outside $\Omega$. From the last inequality and by choosing $h = \Lambda_{a}{\bf X}\ff = \Lambda_{a}^+{\bf X}\ff+\Lambda^-_{a}{\bf X}\ff$, the continuity and stability of the measurement operator in \eqref{FIO_reg} and \eqref{Homan_stability} give us that  
\begin{equation}\label{energy_ineq2}
\|{\bf X}\ff\|^2_{\mathcal{H}(\Omega)} \leq CE_{\Omega^c}(\uu(T))  + C\| {\bf X}\ff\|^2_{H^{0}(\Omega)\times H^{-1}(\Omega)}.
\end{equation}

%A similar inequality holds by changing $t_{2}$ for $T$ since the energy of $\uu(t_{2})$ flows out of $\Omega$, this is $E_{\Omega^c}(\uu(t_{2})) \leq E_{\Omega^c}(\uu(T))$, thus
%\begin{align*}
%\|{\bf X}\ff\|^2_{\mathcal{H}(\Omega)}\leq CE_{\Omega^c}(\uu(T))  + C\|\ff\|^2_{H^{0}(\Omega)\times H^{-1}(\Omega)}. 
%\end{align*}

By compactness of $WF(\ff)\cap S^*\bar{\Omega}$, in a conic neighborhood of such set we can consider a finite pseudo-differential partition of unity $1=\sum\chi_j$ of symbols of $\Psi$DO's $X_j$, localizing in conic neighborhoods of a finite number of points $(x_j,\xi^j)\in WF(\ff)\cap S^*\bar{\Omega}$. Then, $\ff = (I-\sum {\bf X}_j)\ff + \sum {\bf X}_j\ff$, where $WF(\ff)\cap WF(I-\sum {\bf X}_j)=\emptyset$, thus from the inequality above we get 
$$\|\ff\|_{\mathcal{H}(\Omega)}^2\leq C\sum_j E_{\Omega^c}(e^{t{\bf P}_a}{\bf X}_j\ff(T)) + C\|\ff\|^2_{H^{0}(\Omega)\times H^{-1}(\Omega)}.$$
We can substitute $e^{t{\bf P}_a}{\bf X}_j\ff$ by $Q_j{\bf X}_j\ff$ in the right hand side, with $Q_j$ denoting the parametrix operator for the wave equation localized near $(x_j,\xi_j)$, since both are equal up to a compact operator applied to $\ff$ (see \cite[\S 4.10]{TATbrain}). By means of Egorov's Theorem (see for instance \cite[Theorem 10.1]{IntroMLA1994}) there exist zero order $\Psi$DO's, $\tilde{{\bf X}}_j$, such that $Q_j{\bf X}_j = \tilde{{\bf X}}_jQ_j$ modulo a smoothing operator, therefore the exact solution $\uu = e^{t\text{\bf P}_a}\ff$ of \eqref{general IBVP} satisfies
\begin{equation}\label{energy_ineq3}
\|\ff\|_{\mathcal{H}(\Omega)}\leq C\|\uu(T)\|_{H^1(\Omega^c)\oplus L^2(\Omega^c)}+ C\|\ff\|_{H^{0}(\Omega)\times H^{-1}(\Omega)}.
\end{equation}
To get rid of the second term in the right hand side we use a classical argument that requires to show 
$$\mathcal{H}(\Omega)\ni \ff\mapsto \uu(T)\in H^1(\RR^n\setminus\Omega)\oplus L^2(\RR^n\setminus\Omega)$$
is an injective bounded map. The continuity follows from \cite[Proposition 1]{Homan} when the domain is a large ball containing $\text{supp}(\uu(T))$, which we know exists by finite speed of propagation. Assume there is $\ff\in \mathcal{H}(\Omega)$ such that 
$$u(T,x) = 0,\;\forall x\in\RR^n\setminus\Omega.$$
By finite domain of dependence of the damped wave equation, $u$ must vanish in $\{(t,x)\in(0,\infty)\times\RR^n: \text{dist}(x,\partial\Omega)>|T-t|\}$, and since $\uu(0,\cdot) = \ff $ with $\text{supp}\ff\subset \bar{\Omega}$, by the same reason $u=0$ in $\{(t,x)\in(0,\infty)\times\RR^n: \text{dist}(x,\partial\Omega)>t\}$. Intersecting both light cones we in fact have that $u$ vanishes in $[0,3T/2]\times\{x\in\RR^n:\text{dist}(x,\partial\Omega)>T/2\}$. By Tataru's unique continuation \cite[Theorem 4]{Tataru1} and since we assume $T>T_0(\Omega)>2T_1(\Omega)$, with 
$$T_1(\Omega) = \sup_{x\in \Omega} d(x,\partial\Omega),$$
and $d(x,\partial\Omega)$ the infimum of the lengths of curves with respect to $c^{-2}dx^2$ starting at $x$ and ending at $\partial\Omega$, we obtain $u = 0$ in a neighborhood of $\{3T/4\}\times \RR^n$. From Proposition 1 in \cite{Homan}, it then follows by solving the initial value problem backward from $t=3T/4$ to $t=0$, that $u=0$ in $[0,3T/4]\times\Omega$, so in particular $\ff\equiv0$. 

On the other hand, the inclusion $\mathcal{H}(\Omega)\hookrightarrow H^{0}(\Omega)\times H^{-1}(\Omega)$ is compact, so by \cite[Proposition V.3.1]{Taylor} there is some $C>1$ which depends on $a$, such that
$$\|\ff\|_{\mathcal{H}(\Omega)} \leq C\|\uu(T)\|_{H^1(\RR^n\setminus\Omega)\oplus L^2(\RR^n\setminus\Omega)}.$$
To obtain the energy $E_{\Omega^c}(\uu(T)) $ in the right hand side of the last estimate and conclude the proof, we apply Poincare's inequality in $B\setminus\Omega$ with $B$ a large ball containing $\Omega$ and such that $\uu(T)$ vanishes in its complement (exists by finite speed of propagation). 
\end{proof}

\begin{figure}%[ht]
\centerline{\includegraphics[scale=0.5]{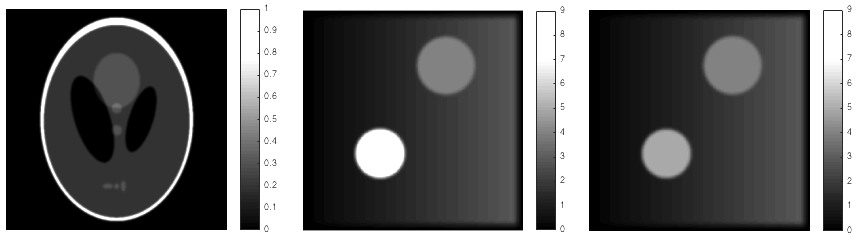}}
\caption{{\small (left) Initial condition to be reconstructed. (center) Attenuation coefficient for the first example. (right) Smaller attenuation coefficient for the second example.}}
\label{fig:settings}
\end{figure}

From the already proven proposition we get
\begin{align*}
\mathcal{E}_{\Omega}(\uu,T) = \mathcal{E}_{\RR^n}(\uu,T) - \mathcal{E}_{\Omega^c}(\uu,T)
&\leq \|\ff\|^2_{\mathcal{H}(\Omega)} - \|\ff\|^2_{\mathcal{H}(\Omega)}/C\\
&\leq (1-1/C)\|f\|^2_{\mathcal{H}(\Omega)},
\end{align*}
where we used that the extended energy is preserved in $\RR^n$, i.e. $\mathcal{E}_{\RR^n}(\uu,T) = \mathcal{E}_{\RR^n}(\uu,0)$.
Finally by recalling \eqref{K_ineq} we conclude that
\begin{align*}
\|{\bf K}_a\ff\|^2_{\mathcal{H}(\Omega)}= E_{\Omega}({\bf w}(0))\leq \mu\|\ff\|^2_{\mathcal{H}(\Omega)},
\end{align*}
with $0<\mu<1$, therefore ${\bf K}_a$ is a contraction in the norm induced by $\|\cdot\|_{\mathcal{H}(\Omega)}$ and there is convergence of the Neumann series.

\begin{figure}[ht]
\centerline{\includegraphics[scale=0.25]{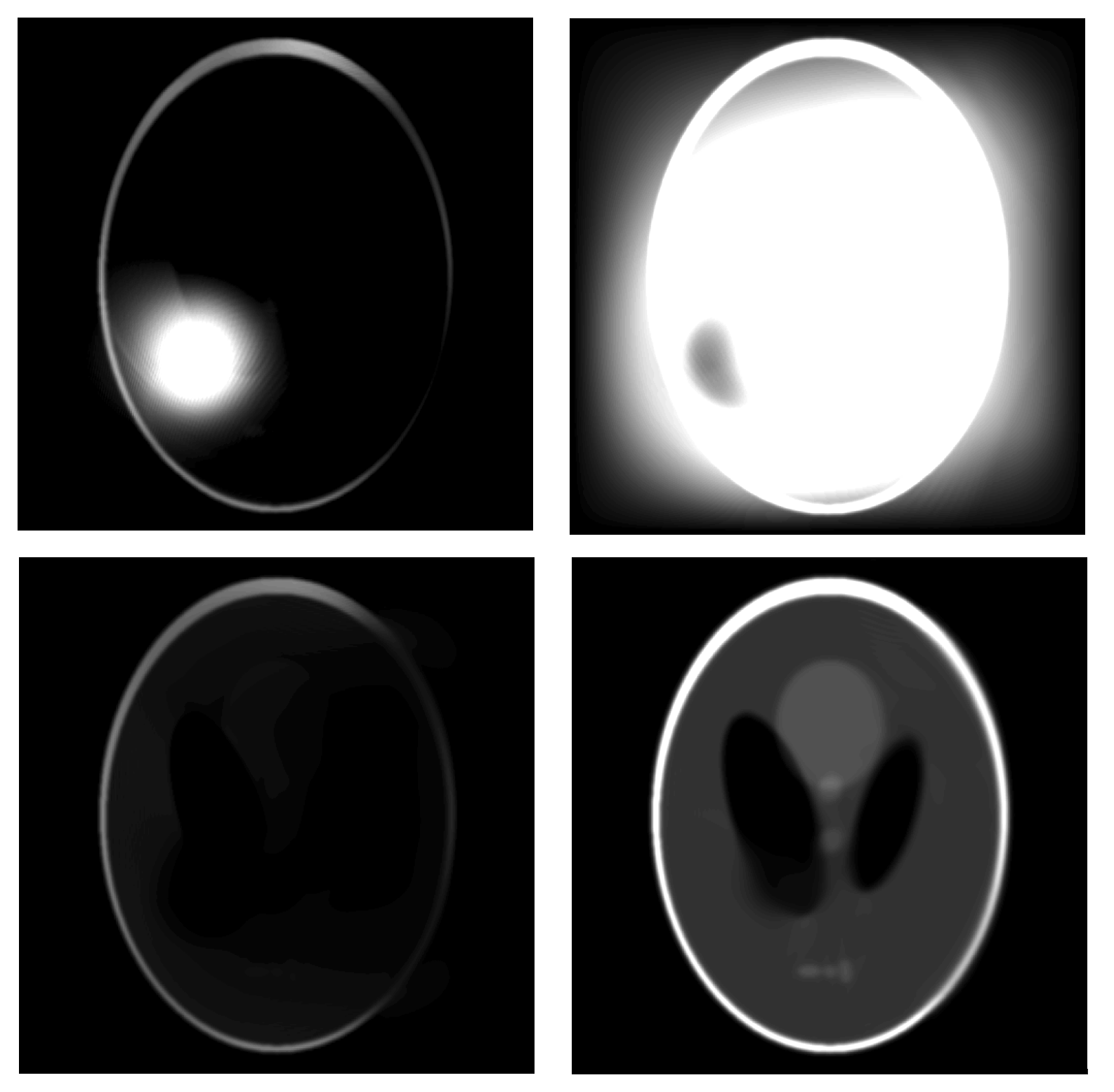}}
\caption{\small
The first row is Homan's reconstruction with 1 and 4 terms. The respective errors are 126\% and 322\%. The second row is the reconstruction following our new method for 1 and 100 terms where we got errors of 76\% and 12\% respectively.}
\label{ex_high_att}
\end{figure}

\begin{remark}
Notice that $\mu$ depends on the attenuation coefficient $a$ and it becomes closer to 1 as $\|a\|_\infty$ goes to infinity. Then, the convergence of the Neumann series gets worse when large attenuations are involved.
\end{remark}
\bigskip
%%%%%%%%%%%%%%%%%%%%%%%%%%%%%%%%%%%%%%%

%\newpage 
\centerline{\sc 6. Numerics}

We carried out some numerical simulations with the purpose of illustrating the theoretical result obtained in this article. We used a grid of 501x501 to discretize the domain $\Omega = [-1,1]^2$, a variable sound speed given by the function $c(x) = 1 + 0.2\sin(2\pi x_1) + 0.1\cos(2\pi x_2)$, and a regularized version of the Shepp-Logan phantom as initial condition. We also chose a measurement time of $T=3$, and since the convergence rate of this reconstruction method is slower due to the presence of the damping parameter we used 100 terms in the Neumann series reconstruction. The forward propagation of the wave was discretized following the work done in \cite{TATnumeric} which implements PML conditions to simulate the propagation of waves in the free space, while for the back projection of the boundary data we used an standard second order accurate scheme.

\begin{figure}%[ht]
\centerline{\includegraphics[scale=0.25]{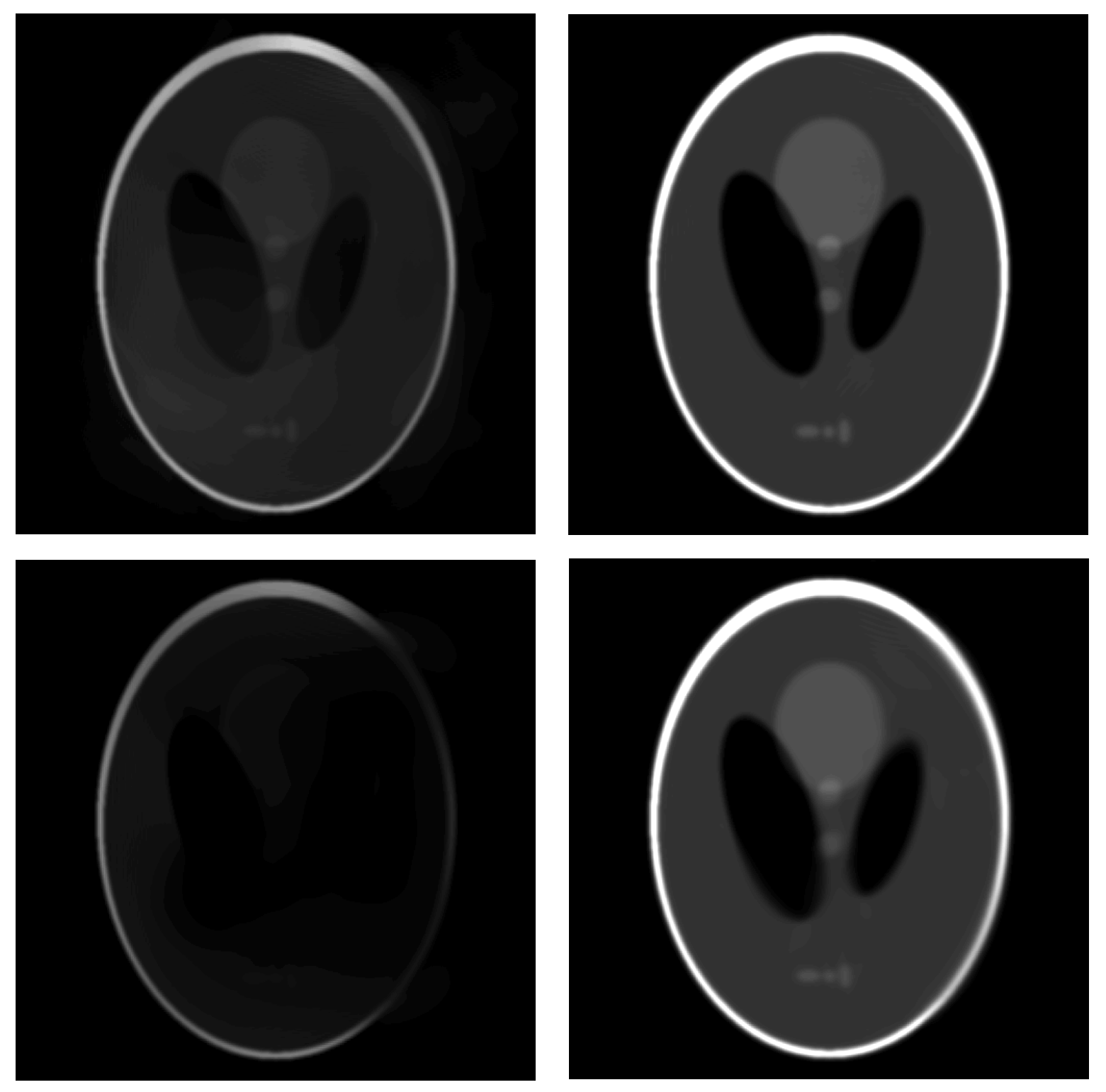}}
\caption{\small
For Homan's reconstruction in the first row, the errors are 46\% and 5\% for 1 and 8 terms, while following ours reconstruction method we get 75\% and 11\% for 1 and 100 terms respectively.}
\label{ex_low_att}
\end{figure}

We consider a damping coefficient formed by a small background attenuation that 
increases toward the x-axis 
and two regularized disks with higher attenuations as you can see in Figure \ref{fig:settings}. Namely, we let $a(x)$ to be of the form
$$a(x) = F\big( d_1\chi_{D_1}(x) + d_2\chi_{D_2}(x) + d_3(1+x_1)\chi_{\Omega\setminus(D_1\cup D_2)}(x)\big)$$
%\begin{figure}%[ht]
%\centerline{\includegraphics[scale=0.25]{ex_high_att_v2.png}}
%\caption{\small
%The first row is Homan's reconstruction with 1 and 4 terms. The respective errors are 126\% and 322\%. The second row is the reconstruction following our new method for 1 and 100 terms where we got errors of 76\% and 12\% respectively.}
%\label{ex_high_att}
%\end{figure}
where $\chi$ is the characteristic function, $D_1 = \{x\in\RR^2 : |(x_1,x_2) -(\frac{1}{3},-\frac{1}{2}) |^2 < 0.05)\}$, $D_2 = \{x\in\RR^2 : |(x_1,x_2) - (-\frac{1}{3},\frac{1}{3}) |^2 < 0.07)\}$, and $F$ stands for a regularization function which also brings the attenuation coefficient to zero in a smooth way close to $\partial\Omega$. The idea is to compare the images obtained by the Time Reversal (this is only one term in the Neumann series) and the images when more summands are considered for Homan's reconstruction method, this is solving \eqref{time_reversal} in the back propagation process; and the method developed in this article.

In the first example we take $d_1 = 9$, $d_2 = 4$ and $d_3 = 1.5$. Figure \ref{ex_high_att} shows the results of the Time Reversal (left column) and when adding more terms in the Neumann series (right column). It's evident the improvement in the reconstruction by using our new approach since the first method diverges while ours converges, at a slow rate though and it requires significantly many more iterations in order to achieve a reasonable error.

For the second example (Figure \ref{ex_low_att}) we consider a lower attenuation coefficient with $d_1 = 5$, $d_2 = 4$ and $d_3 = 1.5$. In this case Homan's Neumann series does converge and of course according to the theory it shows a faster convergences than the Neumann series presented here.\\

%%%% Acknowledgments %%%%%%%%%%%%%%%%%%%%%%%%%%%%%%%%%%%%
%\newpage
\centerline{\sc 7. Acknowledgments}

The author would like to thanks Gunther Uhlmann for all the support and advising throughout the writing of this paper and Carlos Montalto for the many fruitful conversations about the subject. Also thanks Francois Monard and Sebastian Acosta for the help and guidance in the numerical part.

%%%% Bibliography %%%%%%%%%%%%%%%%%%%%%%%%%%%%%%%%%%%%
\renewcommand{\refname}{\centerline{\large \sc References}}
\bibliographystyle{plain}
{\footnotesize
\bibliography{biblio}}
\end{document}